\def\BibTeX{{\rm B\kern-.05em{\sc i\kern-.025em b}\kern-.08em
    T\kern-.1667em\lower.7ex\hbox{E}\kern-.125emX}}
\newtheorem{theorem}{Theorem}[section]
\newtheorem{corollary}{Corollary}[section]
\newtheorem{lemma}[theorem]{Lemma}
\newtheorem{remark}{Remark}[section]
\newtheorem{assumption}{Assumption}
\newcommand{\sgn}{\text{sign}}
\begin{document}

\title{Opinion-driven risk perception and reaction in SIS epidemics\\
\thanks{This research was supported in part by ARO grants W911NF-18-1-0325 and W911NF-24-1-0126 and AFOSR grant FA9550-24-1-0002.}
\thanks{N. Leonard and M. Ordorica are with the Dept. of Mechanical and Aerospace Engineering at Princeton University, Princeton, NJ, 08544 USA; {m.ordorica,naomi}@princeton.edu}
\thanks{A. Bizyaeva is with the Sibley School of Mechanical and Aerospace Engineering at Cornell University, Ithaca, NY, 14853 USA; anastasiab@cornell.edu}
\thanks{S. Levin is with the Dept. of Ecology and Evolutionary Biology at Princeton University, Princeton, NJ, 08544 USA; slevin@princeton.edu}
}

\author{Marcela Ordorica Arango, Anastasia Bizyaeva, Simon A. Levin, Naomi Ehrich Leonard}

%\author{\IEEEauthorblockN{A1}
%\IEEEauthorblockA{\textit{dept. name of organization (of Aff.)} \\
%\textit{name of organization (of Aff.)}\\
%City, Country \\
%email address or ORCID}
%\and
%\IEEEauthorblockN{A2}
%\IEEEauthorblockA{\textit{dept. name of organization (of %Aff.)} \\
%\textit{name of organization (of Aff.)}\\
%City, Country \\
%email address or ORCID}
%\and
%\IEEEauthorblockN{A3}
%\IEEEauthorblockA{\textit{dept. name of organization (of Aff.)} \\
%\textit{name of organization (of Aff.)}\\
%City, Country \\
%email address or ORCID}
%}

\maketitle

\begin{abstract} 

We present and analyze a mathematical model to study the feedback between behavior and epidemic spread in a population that is actively assessing and reacting to risk of infection. 
In our model, a population dynamically forms an opinion that reflects its willingness to engage in risky behavior (e.g., not wearing a mask in a crowded area)  or reduce it (e.g., social distancing). 
We consider SIS epidemic dynamics in which the contact rate within a population adapts as a function of its opinion.
For the new coupled model, we prove the existence of two distinct parameter regimes. 
One regime corresponds to a low baseline infectiousness, and the equilibria of the epidemic spread are identical to those of the standard SIS model. 
The other regime  corresponds to a high baseline infectiousness, and there is a bistability between two new endemic equilibria that reflect an initial preference towards either risk seeking behavior or risk aversion. 
We prove that risk seeking behavior increases the steady-state infection level in the population compared to the baseline SIS model, whereas risk aversion decreases it.
When a population is highly reactive to extreme opinions, we show how risk aversion enables the complete eradication of infection in the population. 
Extensions of the model to a network of subpopulations are explored numerically. 

%We present and analyze the nonlinear opinion dynamics SIS model (NOD-SIS) to understand the effects of a population's behavior toward reducing its contact rate on an epidemic's spread.
%We consider a well-mixed setting and analyze the coupling between a population's opinion to adjust its contact rate based on infection levels and the effects this opinion has on infection spread. 
%We find that the model recovers the standard SIS behavior for 
%exactly like SIS
%regime where doesnt
%We study the system's fixed points and use a Lyapunov-Schmidt reduction to study its bifurcations. 
%The model extends known results of the standard SIS and presents a transcritical bifurcation when the basic reproduction number is equal to one. 
%We prove the existence of a non-persistent unfolding of a pitchfork bifurcation where two different endemic equilibria (EE) appear. 
%In the regime where they are both stable, initial risk tolerance makes the system converge to the higher EE. 
%In contrast, risk aversion makes the system reach a lower EE, suggesting that risk aversion promotes the reduction of infection levels. 
%When opinion feedback gains are sufficiently large, numerical explorations show that risk aversion enables the complete eradication of the disease. 
%We explore numerically the extension of the well-mixed setting to a network of populations. 
%This work shows that risk aversion and risk tolerance have antagonistic effects on epidemic spread. We discuss the consequences of this in epidemic mitigation.

\end{abstract}

\section{Introduction}
Pandemics pose serious challenges to health systems. 
Analyzing how viruses spread through a population can help with the design and evaluation of control measures that reduce the impact of epidemics on human lives.
Infection spread is influenced by many factors, including the infectiousness of a disease and how quickly individuals recover from infection.
These factors are taken into account in standard compartmental epidemiological models, such as the SIS (Susceptible-Infected-Susceptible), SI, and SIR models.
These models have proved helpful in the study of disease spread, but they do not account for human behavior in response to infection nor the effects of behavior on infection spread.

Non-pharmaceutical strategies, such as the use of masks or reducing physical interactions during an epidemic, determine infection spread \cite{yang2022sociocultural, qiu2022understanding, bjornstad2020modeling}. A large body of literature has explored the interaction of a population's opinions during an epidemic and the spread of infection. In \cite{zhou2020active}, the authors present a feedback-controlled epidemic model where a population controls its contact rate as a function of infection levels, and \cite{bizyaeva2024active} extends this work by analyzing the network setting. \cite{xu2024discrete} uses a bilayer network to model the interaction between the opinions about public health concerns and infection. \cite{peng2021multilayer} employs multi-layer networks to explore how infection and opinions to engage in safe or risky behavior evolve, and \cite{she2022networked,xuan2020network,lin2021discrete} couple opinion about the severity of an epidemic and the network SIS model in continuous and discrete time. Game-theoretical approaches have also been used to explore the interplay between behavior and disease spread. \cite{paarporn2023sis} couples SIS epidemics with a replicator equation, \cite{satapathi2023coupled} explores how opinions to adopt safety measures and infection coevolve when reinfection is possible. The works of \cite{ye2021game,frieswijk2022mean} develop behavioral epidemiological models to explore how human decisions and epidemics evolve in networks in discrete and continuous time, and \cite{liu2022herd} analyzes the effects of herd behaviors in epidemics. The works \cite{doostmohammadian2023network} and \cite{doostmohammadian2020centrality} focus on the effects of network properties in epidemics and applications to mitigation and control of spread. %The papers \cite{funk2010modelling, bedson2021review} present reviews of studies that analyze the impact of behavior on epidemics. 

%We contribute to the research by examining the interconnection between opinions about adapting contact rates during an epidemic and infection spread. 
We investigate the feedback between human behavior and infection spread in a population that actively assesses the risk of infection and develops an opinion about increasing or reducing its contacts.
We introduce and analyze the nonlinear opinion dynamics SIS (NOD-SIS) model in which a population with SIS epidemic dynamics adjusts its contact rate based on its dynamic opinion about infection risk, potentially embracing one of two behavioral strategies. 
One strategy is \textit{risk seeking}, in which a population \textit{increases} its contact rates as infection levels rise. Performing essential work during a pandemic surge is an example. %of a risk tolerance strategy. 
The other strategy is \textit{risk aversion}, in which a population \textit{decreases} its contact rates as infection levels rise.
Social distancing is an example. % of a risk aversion strategy.
%A population consisting of risk-tolerant individuals will increase their contact rates as they perceive that infections are increasing. 
%A population made up of risk-averters will reduce their contacts when they perceive that infections are increasing.
When opinions about infection risk are equal to zero (i.e., neutral), the population is \textit{risk neutral}.%, and in this case, our model recovers the dynamic behavior of the standard SIS model.

This study is distinguished from previous works %that consider feedback between opinion and epidemic dynamics 
due to its consideration of a \textit{nonlinear} opinion update rule recently proposed in \cite{BizyaevaTAC}. In contrast, past works including \cite{she2022networked,xuan2020network,lin2021discrete} assume that opinions evolve through a linear averaging process. Nonlinear opinion dynamics models can make dramatically different predictions from their linear counterparts \cite{leonard2024fast}. These differences may lead to different conclusions about the effect of public opinion on the outcomes of epidemics. Our study is a rigorous examination of nonlinear effects of opinion dynamics in epidemic-behavioral models. 

Our main contributions are the following. First, we introduce the NOD-SIS model for a single population. Second, we %analyze the dynamical behavior of the nonlinear opinion dynamics SIS (NOD-SIS) model and 
examine the fixed points of the model in different parameter regimes. 
%We perform a bifurcation analysis to establish how these fixed points evolve as a function of the parameters. 
We find that for low infectiousness and basal urgency, and in a population with low peer pressure, the system behaves like the standard SIS model. For high infectiousness, two stable fixed points exist, and convergence to each one is determined by the population's initial preference towards either risk seeking or risk aversion. 
%We show that the infection levels of risk tolerators are greater than the infection levels of risk ignorers, and that risk averters have lower infection levels than risk ignorers. 
Third, we show that when peer pressure is high, the risk-averter strategy achieves a stable opinionated infection-free equilibrium. This result suggests that exercising social distancing in a population that is sensitive to peer pressure can completely eradicate infection. 
Fourth, we extend and numerically explore the NOD-SIS model in a structured population with two networks, the first representing the physical contacts between subpopulations and the second representing a social influence network with cooperative and antagonistic interactions. 
%We find that with a belief network with only cooperation, the system presents bistability where all populations choose the same strategy, and risk averters have lower infection levels than risk tolerators. If the belief network has antagonism, some populations settle at the risk averter strategy and some at the risk tolerance strategy. As in the well-mixed case, averters have lower infection levels.
%This work is structured as follows: in Section \ref{sec:background}, we present the notation and mathematical preliminaries needed for this work. We also state the standard SIS model. In Section \ref{sec:theoretical_results}, we present the theoretical results about the equilibria and bifurcations in the system in a population with small peer pressure, and we present numerical explorations about the dynamical behavior in a population with high peer pressure. In Section \ref{sec:numerical_simulations}, we explore numerically the behavior of the model in a networked population. 

%The paper is structured as follows. 
In Section \ref{sec:background} we review mathematical preliminaries and the SIS model. We define the NOD-SIS model in Section \ref{sec:NOD-SIS} and show it is well-posed. In Section \ref{sec:theoretical_results} we analyze the model in different parameter regimes. We extend to a network in Section \ref{sec:numerical_simulations} and conclude in Section \ref{sec:conclusion}.

\section{Background}
\label{sec:background}
\subsection{Mathematical Preliminaries}
$\mathbb{R}$ denotes the real numbers. For a set $X$, $|X|$ denotes its cardinality. Let $(X,\tau)$ be a topological space. For a set $\Omega\subseteq X$, the boundary of $\Omega$ is $\partial(\Omega)=\{\omega\in \Omega \mid U\cap \Omega\neq \emptyset \text{ and } U\cap(X-\Omega)\neq \emptyset\text{, for all }U\in \tau\ \text{such that }\omega\in U\}$. 
We denote by $\{x\}$ the set whose only element is $x$. 
An undirected graph $\mathcal{G}$ consists of a pair $(V,E)$ such that $V$ is a non-empty vertex set and $E\subseteq V\times V$ is an edge set of pairs of elements in $V$. 
We write $V(\mathcal{G})=V$ and $E(\mathcal{G})=E$. % to distinguish the sets of nodes and edges between different graphs. 
%Given a graph $\mathcal{G}$, 
Nodes $i,j\in V(\mathcal{G})$ are neighbors if $(i,j)\in E(\mathcal{G})$. The adjacency matrix $A_{\mathcal{G}}$ associated to  $\mathcal{G}$ is a matrix of size $|V(\mathcal{G})| \times |V(\mathcal{G})|$ such that $A_{\mathcal{G}}(i,j)=1$ if $(i,j)\in E(\mathcal{G})$ and $0$ otherwise. 
%A square matrix $A$ of size $n$ is symmetric if $A(i,j)=A(j,i)$ for all $i,j\in \{1,\ldots,n\}$. 
When $\mathcal{G}$ is  undirected, %the associated adjacency matrix 
$A_{\mathcal{G}}$ is symmetric.

The Lyapunov-Schmidt (L-S) reduction procedure, presented in  \cite{Golubitsky1985}, is a projection-based dimensionality reduction technique used in the analysis of local bifurcations in nonlinear dynamical systems. L-S reduction maps a nonlinear system to a low-dimensional representation with equilibria that are in one-to-one correspondence with those of the original system. Bifurcations of the original system are classified by analyzing the simpler low-dimensional reduced order model.
Let $F$ be a vector field $F:\mathbb{R}^n\times \mathbb{R}\to\mathbb{R}^n$, $x\in \mathbb{R}^n$ a vector of variables, and $\lambda\in \mathbb{R}$ a bifurcation parameter. Given a dynamical system $\dot{x}=F(x,\lambda)$, the fixed points of the system are given by $F(x,\lambda)=0$. Suppose that $J(x_0,\lambda_0):=D_xF(x_0,\lambda_0)$, the Jacobian of the system at $(x_0,\lambda_0)$, has a simple zero eigenvalue. The L-S reduction %is $g(x,\lambda)$, with 
$g:\mathbb{R}\times\mathbb{R}\to\mathbb{R}$ is such that the solutions of $g(x,\lambda)=0$ are in one-to-one correspondence with the fixed points of the system $\dot{x}=F(x,\lambda)$ near the singular point. Conditions for the existence of the L-S reduction are in \cite[Theorem 2.3]{10885998}.
%In particular, this procedure results in a system of algebraic equations whose zeros are in one-to-one correspondence to the fixed points of the original dynamical system near the bifurcation point.
%The L-S reduction offers a tractable method for analyzing the fixed points and bifurcations of a nonlinear high-dimensional system, such as those that appear in large interconnected network systems and would otherwise be intractable.
\subsection{SIS Model}
The %Susceptible-Infected-Susceptible (SIS) 
SIS model is a compartmental epidemiological model that describes the spread of a disease in a population when reinfection is possible. 
%The first compartmental model was presented in \cite{kermack1927contribution}. 
In the SIS model, a population is partitioned into two compartments (susceptible and infected), and agents transition between these compartments at rates that depend on the infectiousness of a disease, the contact rate between agents, and the rate at which agents recover. The proportion of infected agents in a population, denoted by $p(t)\in [0,1]$, evolves over time $t$ as
\begin{align}
    \dot{p} &= \bar\beta\alpha(1-p)  p-\delta p \label{eq:stand_sis_p},
\end{align}
where $\bar\beta > 0$ is the disease-dependent transmissibility constant, $\alpha > 0$ is the per-capita contact rate within the population, and $\delta > 0$ is the recovery rate. %The SIS model assumes that $\alpha$ is constant. In reality, agents often engage in attitudes to increase or reduce their contacts, for example, by exercising social distancing. In the following section we present the NOD-SIS model, a model to account for risk-of-infection perception and reaction and analyze their effects on infection spread.
%To account for this behavior, the actSIS model from \cite{bizyaeva2024active} replaces the constant $\alpha$ in \eqref{eq:stand_sis_p} with a saturated function that takes as argument a filtered observation of the infection levels in the population. More precisely, the actSIS equations are
%\begin{align}
%    \dot{p} &= \bar\beta\alpha(p_s)(1-p)  p-\delta p \label{eq:actsis_p},\\
%    \tau_s\dot{p}_s &= -p_s+p.
%\end{align}
%Here, $p_s$ represents the perceived levels of infection. This filtered observation serves as an argument to the function $\alpha(p_s)$, which models the adaptive contact rate of the population. The shape of $\alpha(p_s)$ determines the reaction a population has to changes in infection levels. In particular, a decreasing $\alpha(\cdot)$ represents a population where agents reduce their contact rate if they perceive infection levels rising, indicating a risk-averse attitude. An increasing $\alpha(\cdot)$ indicates tolerance to risk in a population. These two antagonistic strategies incorporate human behavior into the SIS model and examine its effects on epidemics. However, the election of the function $\alpha(\cdot)$ is determined a priori and cannot evolve as infection levels change, meaning that a population will maintain the same risk strategy throughout an epidemic. The lack of dynamism in the chosen strategy is inaccurate to observations of human behavior during a spreading process, as agents are able to change their risk strategy based on how urgent and convenient it is to engage in protective or risky behavior.

\section{NOD-SIS Model \label{sec:NOD-SIS}}
The SIS model \eqref{eq:stand_sis_p} assumes that contact rate $\alpha$ is constant within the population for the duration of the epidemic spread. 
In reality, individuals often engage in attitudes to increase or reduce their contacts, e.g., by exercising social distancing. 
We present the NOD-SIS model, which accounts for risk-of-infection perception and reaction %and analyze their effects on infection spread.
%The NOD-SIS model incorporates 
by coupling the SIS model and the nonlinear opinion dynamics (NOD) of \cite{BizyaevaTAC,leonard2024fast}. 

%In NOD-SIS, the population evolves over time its opinion 
We let $x(t) \in [-1,1]$ be the population's opinion at time $t$ of two mutually exclusive options: to decrease or increase contact rate. %, in response to its perception of risk. 
The NOD-SIS model couples the evolution of $p$ from \eqref{eq:stand_sis_p} and $x$ from \cite{BizyaevaTAC,leonard2024fast}:
%i.e. the population increases its contact rate compared to the baseline. When $x=0$, the population is indifferent to risk and has a baseline contact rate; $x<0$ represents risk aversion and an opinion to reduce contact from the baseline. $x>0$ represents risk seeking, i.e. the population increases its contact rate compared to the baseline. 
%Infection levels, denoted by $p \in [0,1]$, and a population's opinion to engage in risky or safe behavior, denoted by $x \in [-1,1]$, evolve in an interconnected way following equations
\begin{align}
    \dot{p} &= \bar\beta(1+x)(1-p)  p-\delta p,\label{eq:odsis_p_scalar}\\
    \tau_x\dot{x}&=-x+\tanh\left((k_pp + k_xx^2 + u_0)x \right) \label{eq:odsis_x_scalar}.
\end{align}
The more negative (positive) the opinion $x$, %$x<0$ ($x>0$), 
the more the population decreases (increases) contact relative to the baseline. When $x=0$ the population maintains the baseline. If the perception of risk is high, $x<0$ represents risk aversion, $x>0$ represents risk seeking, $x=0$ represents indifference to risk. 
%When $x=0$, the population is indifferent to risk and has a baseline contact rate; $x<0$ represents risk aversion, i.e. the population reduces its contact from the baseline; and $x>0$ represents risk seeking, i.e. the population increases its contact rate compared to the baseline. As in the SIS model, $\bar\beta > 0$ is the transmissibility constant of the infection and $\delta > 0$ is the recovery rate within the population. 
The parameter $\tau_x > 0$ represents the timescale of the opinion dynamics relative to the infection spread, and $u_0 \geq 0$ is the basal level of attention or urgency in the population. The constants $k_p\geq 0$ and $k_x\geq 0$ are infection and opinion feedback gains, respectively. %and $b \in \mathbb{R}$ is a bias the population holds for either of the two strategies. 
%The constant 
$k_x$ can be interpreted as the magnitude of peer pressure to modify contact as infection levels change. $k_p$ is the strength of the reaction to information about infection level. The term $u(p,x) := k_p p + k_x x^2 + u_0$ models the net urgency within the population towards forming an opinion about infection risk. Following \cite{leonard2024fast}, the quadratic term $x^2$ in the urgency is used to model how urgency changes relative to the magnitude of the opinion rather than the preference. In %the opinion dynamics 
\eqref{eq:odsis_x_scalar}, $u(p,x) = 1$ is a critical threshold: when $u(p,x) < 1$ the linear negative feedback dominates and stabilizes the neutral opinion, and when $u(p,x)>1$ the nonlinear positive feedback dominates and destabilizes the neutral opinion. The term $u(p,x)x$ is transformed by the saturating function $\text{tanh}(\cdot)$ to bound the magnitude of opinion levels. We now prove that the NOD-SIS model is well-posed.
\begin{theorem}[Positive Invariance]
\label{thm:invariance}
    Let $\Omega=[0,1]\times[-1,1]$. Then $\Omega$ is positively invariant under the flow determined by equations \eqref{eq:odsis_p_scalar} and \eqref{eq:odsis_x_scalar}.
\end{theorem}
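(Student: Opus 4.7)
The plan is to invoke Nagumo's theorem (tangent cone / subtangential condition): a closed set is positively invariant under a locally Lipschitz flow if and only if, at every boundary point, the vector field lies in the tangent cone of the set. Since $\Omega$ is a rectangle with smooth boundary except at the four corners, it suffices to check on each of the four relative interiors of the edges that the outward component of the vector field is non-positive; standard arguments then extend the conclusion to the corners by continuity. Existence and uniqueness of solutions is not an issue because both right-hand sides are $C^\infty$ in $(p,x)$.

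The bulk of the work is the four sign checks, all of which are short. First, on the edge $p=0$ one has $\dot p = 0$, so this edge is an invariant set of the $p$-dynamics and the vector field is tangent. Second, on $p=1$, the logistic-type term $(1-p)p$ vanishes and $\dot p = -\delta < 0$, which points strictly into $\Omega$ regardless of the value of $x\in[-1,1]$ (note that the sign of $1+x\ge 0$ is irrelevant here). Third, on $x=1$ one has
\begin{equation*}
\tau_x \dot x = -1 + \tanh\!\bigl(k_p p + k_x + u_0\bigr) \le 0,
\end{equation*}
since $\tanh(\cdot) \le 1$; thus the $x$-component of the flow does not push outward. Symmetrically, on $x=-1$,
\begin{equation*}
\tau_x \dot x = 1 - \tanh\!\bigl(k_p p + k_x + u_0\bigr) \ge 0,
\end{equation*}
by the same bound, again pointing inward or tangentially. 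Note that each check uses only the range bound $|\tanh|\le 1$ together with the non-negativity of the parameters $k_p, k_x, u_0, \bar\beta, \delta$.

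For the four corners the tangent cone is a quadrant, and one verifies that the above sign information for both components simultaneously places the vector field in that quadrant (for instance at $(1,1)$ we get $\dot p<0$ and $\dot x\le 0$, both compatible with the inward quadrant). Hence Nagumo's condition holds on all of $\partial \Omega$, establishing positive invariance. The main ``obstacle'' is really just organizational — being careful that the estimate on $x=\pm 1$ does not require any sign restrictions on $p$ and that the corners are not overlooked — but no genuine technical difficulty arises because the saturation built into the $\tanh$ nonlinearity and the logistic structure of the SIS term together make the flow boundary-respecting essentially by construction.
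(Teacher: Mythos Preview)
Your proof is correct and follows essentially the same approach as the paper's: check the sign of the relevant component of the vector field on each of the four edges of $\partial\Omega$ and then invoke Nagumo's theorem. The paper's version is simply terser, omitting the explicit computations and the discussion of the corners, but the underlying argument is identical.
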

\begin{proof}
    $\partial(\Omega)=\left(\{0\}\times [-1,1]\right)\cup\left(\{1\}\times [-1,1]\right)\cup\left([0,1]\times\{-1\}\right)\cup\left([0,1]\times\{1\}\right)$. If $(p,x)\in \{0\}\times [-1,1], \dot{p}=0$. If $(p,x)\in \{1\}\times [-1,1], \dot{p}\leq 0$. If $(p,x)\in [0,1]\times\{-1\}, \dot{x}\geq 0$, if $(p,x)\in [0,1]\times\{1\}, \dot{x}\leq 0$. By Nagumo's theorem \cite[Theorem 4.7]{blanchini2008set}, $\Omega$ is positively invariant.
\end{proof}

\section{Theoretical Results}
\label{sec:theoretical_results}

In this section, we analyze the dynamical behavior of the NOD-SIS model \eqref{eq:odsis_p_scalar},\eqref{eq:odsis_x_scalar}. We study the fixed points and bifurcations in the model and examine how risk perception and reaction affect the steady-state solutions of epidemic dynamics. First, we make a useful assumption.

\begin{assumption}
\label{as:basic_assumptions}
    i) $u_0<1$; ii) $k_p+u_0>1$. 
\end{assumption}

%The first assumption states that populations have no bias towards any strategy.
%Assumption \ref{as:basic_assumptions} carries the following interpretation.  
Assumption \ref{as:basic_assumptions}.i implies that the basal urgency towards forming an opinion is low, i.e. in the absence of peer pressure and reactivity to infection ($k_x = k_p = 0$), $u(p,x) < 1$ and resistance to forming an opinion dominates in \eqref{eq:odsis_x_scalar}. Assumption \ref{as:basic_assumptions}.ii then implies that in the absence of peer pressure ($k_x = 0$) and when the infection levels are maximal, $u(1,x) > 1$ and nonlinear effects dominate in \eqref{eq:odsis_x_scalar}. That is, the effects of peer pressure and/or reactivity to infection are necessary to modify contact rates in the population from the baseline. If the population is sufficiently reactive then it will eventually modify its behavior in response to rising infection levels even in the complete absence of peer pressure effects.

%The second assumption is consistent with a populations behavior as it states that basal urgency level is low. The second assumption states that basal urgency and reaction are, together, large enough. This also matches human behavior as it ensures reaction to infection levels.

In the following theorem we establish a transcritical bifurcation in the NOD-SIS model \eqref{eq:odsis_p_scalar},\eqref{eq:odsis_x_scalar} in which an \textit{Indifferent Infection Free Equilibrium} (IIFE) loses stability and gives rise to an \textit{Indifferent Endemic Equilibrium} (IEE).

\begin{theorem}%[Equilibria and bifurcation]
   \label{thm:transcritical} Consider \eqref{eq:odsis_p_scalar}, \eqref{eq:odsis_x_scalar}. \textbf{i)} The IIFE $(p_{IIFE},x_{IIFE})=(0,0)$ and the IEE $(p_{IEE},x_{IEE}) =(1-\frac{\delta}{\bar\beta},0)$ are equilibria for all values of $\bar\beta\in (0,1)$, $\delta\in (0,1)$, and $u_0,k_x, k_p\in (0,1)$.
   When $\bar\beta < \delta$, the IEE is outside of the trapping region $\Omega=[0,1]\times[-1,1]$ established in Theorem \ref{thm:invariance}.
   %We name these points as the Indifferent Infection-free Equilibrium (IIFE) and the Indifferent Endemic Equilibrium (IEE), respectively, as they correspond to null opinion levels. 
   \noindent \textbf{ii)} Under Assumption \ref{as:basic_assumptions}, the IIFE is locally exponentially stable for $\bar\beta<\delta$ and unstable for $\bar\beta > \delta$. %For $k_p>1-u_0$, 
   The IEE is locally exponentially stable for $\delta<\bar\beta<\bar\beta^*:=\frac{\delta k_p}{k_p-1+u_0}$ and unstable for $\bar\beta > \bar\beta^*$. 
   \noindent \textbf{iii)} Under Assumption \ref{as:basic_assumptions}, when $\bar\beta=\delta$, the NOD-SIS model undergoes a transcritical bifurcation where the IIFE exchanges stability with the IEE.
\end{theorem}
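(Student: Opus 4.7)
The plan is to verify parts (i) and (ii) by direct computation and Jacobian analysis, then establish part (iii) by reducing the bifurcation to a scalar normal form via the Lyapunov--Schmidt framework referenced in Section \ref{sec:background}. For part (i), substituting each candidate into \eqref{eq:odsis_p_scalar}--\eqref{eq:odsis_x_scalar} is immediate: since $\tanh(0)=0$, any point with $x=0$ makes \eqref{eq:odsis_x_scalar} vanish, while \eqref{eq:odsis_p_scalar} at $x=0$ reduces to the right-hand side of the standard SIS equation \eqref{eq:stand_sis_p} (with $\alpha=1$), whose fixed points are $0$ and $1-\delta/\bar\beta$. The latter is negative precisely when $\bar\beta<\delta$, placing it outside $\Omega$.

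For part (ii), I would compute the Jacobian $J(p,x)$ and exploit the observation that both equilibria lie on the invariant line $x=0$, where the mixed partial $\partial\dot x/\partial p=\tau_x^{-1}\mathrm{sech}^2(\cdot)\,k_p x$ vanishes. Thus $J$ is upper triangular at each equilibrium and its spectrum is read off the diagonal. At the IIFE the diagonal entries are $\bar\beta-\delta$ and $(u_0-1)/\tau_x$; the second is negative by Assumption \ref{as:basic_assumptions}.i, so stability is dictated solely by the sign of $\bar\beta-\delta$. At the IEE the diagonal entries are $\delta-\bar\beta$ and $\tau_x^{-1}(k_p(1-\delta/\bar\beta)+u_0-1)$; the first is negative whenever the IEE lies in $\Omega$, and rearranging the sign condition on the second, using $k_p+u_0>1$ from Assumption \ref{as:basic_assumptions}.ii to guarantee a positive denominator, yields exactly the threshold $\bar\beta^*=\delta k_p/(k_p-1+u_0)$.

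For part (iii), at $\bar\beta=\delta$ the two equilibria collide at the origin, and $J(0,0)=\mathrm{diag}(0,(u_0-1)/\tau_x)$ has a simple zero eigenvalue along $(1,0)^\top$ with the transverse direction strongly contracting. This places the problem in the setting of the Lyapunov--Schmidt reduction of \cite{10885998,Golubitsky1985}. The key simplification is that the line $\{x=0\}$ is itself invariant under \eqref{eq:odsis_p_scalar}--\eqref{eq:odsis_x_scalar} because the $\tanh$ term is odd in $x$; hence the center manifold through the origin coincides exactly with $\{x=0\}$, and the reduction produces the scalar equation $\dot p=(\bar\beta-\delta)p-\bar\beta p^2$, which is the textbook normal form of a transcritical bifurcation at $\bar\beta=\delta$. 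A direct alternative would be to verify Sotomayor's three conditions at $(0,0,\delta)$ with $v=w=(1,0)^\top$: $w^\top F_{\bar\beta}=0$, $w^\top DF_{\bar\beta}v=1\ne 0$, and $w^\top D^2F(v,v)=-2\delta\ne 0$, all of which follow from short calculations.

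I expect the only nontrivial step to be part (iii), specifically matching the branches produced by the scalar reduction to the global equilibrium branches $\{p=0\}$ and $\{p=1-\delta/\bar\beta\}$ identified in part (i); the invariance of $\{x=0\}$ and the quadratic form of the reduced equation resolve this cleanly, since the two roots of the reduced normal form are exactly those branches restricted to $x=0$, with the sign change of the slope as $\bar\beta$ crosses $\delta$ yielding the exchange of stability.
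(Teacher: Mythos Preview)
Your argument is correct and, for parts (i) and (ii), essentially identical to the paper's: both verify the equilibria by substitution and read off stability from the upper-triangular Jacobians at $x=0$. For part (iii) the paper proceeds by explicitly computing the Lyapunov--Schmidt coefficients $g_{yy}=-2\delta$, $g_{\beta\beta}=0$, $g_{y\beta}=1$ and invoking \cite[Proposition 9.3]{Golubitsky1985}; your ``direct alternative'' via Sotomayor's conditions reproduces exactly these quantities and is thus the same proof in different clothing. Your primary route, however---observing that the invariant line $\{x=0\}$ is itself a center manifold so that the reduced equation is literally the scalar SIS model $\dot p=(\bar\beta-\delta)p-\bar\beta p^2$---is a cleaner shortcut the paper does not take. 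It buys you the normal form without any coefficient bookkeeping, at the cost of relying on the somewhat special circumstance that an exact invariant manifold tangent to the center eigenspace is available; the paper's L-S computation is more mechanical but would transfer unchanged to settings (such as the second bifurcation in Theorem~\ref{thm:second_bif}) where no such invariant line exists.
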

\begin{proof}
To prove i), we confirm that the points $(0,0)$ and $(1-\frac{\delta}{\bar\beta})$ are equilibria for all values of the parameters by plugging into \eqref{eq:odsis_p_scalar}, \eqref{eq:odsis_x_scalar} when $\dot{p}=0=\dot{x}$. When $\delta > \bar\beta$, $p_{IEE} < 0$ and the equilibrium is outside of the feasible trapping region $\Omega$. To prove ii), we study stability using linearization. The Jacobian of the system at $(0,0)$ is
\begin{equation}
\small
J(0,0)=\left[\begin{array}{cc}
\bar{\beta}-\delta & 0 \\
0 & \frac{1}{\tau_x}(u_0-1)
\end{array}\right].
\label{eq:jacobian_00}
\end{equation}
Thus, the IIFE is stable when $\bar\beta<\delta$ and $u_0<1$, i.e. when the eigenvalues of \eqref{eq:jacobian_00} are negative, and unstable otherwise. %We must observe that, when $\bar\beta<\delta$, the point $(1-\frac{\delta}{\bar\beta},0)$ is a fixed point of the system; however, in this case, $1-\frac{\delta}{\bar\beta}<1$. Since $p$ represents infection levels, in this case, $p$ is not in the interpretable range. 
%When $\bar\beta>\delta$, the IIFE changes its stability since one eigenvalue of the Jacobian $J(0,0)$ is positive in this parameter region. 
%On the other hand, the fixed point $(1-\frac{\delta}{\bar\beta},0)$ is stable if and only if the eigenvalues of the Jacobian 
Next, we compute the Jacobian at the IEE,
\begin{equation}
\small
J\left(1-\frac{\delta}{\bar\beta},0\right)=\left[\begin{array}{cc}
-\bar{\beta}+\delta & \delta-\frac{\delta^2}{\bar\beta} \\
0 & \frac{1}{\tau_x}((u_0-1)+k_p(1-\frac{\delta}{\bar\beta}))
\end{array}\right].
\label{eq:jacobian_iee}
\end{equation}
It follows from i) that for the IEE to be within the feasible region, we must have $\bar \beta > \delta$; then the first eigenvalue of \eqref{eq:jacobian_iee} $\lambda_1 := - \bar\beta + \delta < 0$. Since $u_0<1$ by Assumption \ref{as:basic_assumptions}.i, the first term inside the parenthesis of the eigenvalue $\lambda_2:=\frac{1}{\tau_x}\left((u_0-1)+k_p(1-\frac{\delta}{\bar\beta})\right)$ is always negative. Thus, $\lambda_2<0$ if and only if $\bar\beta<\bar\beta^*$. From Assumption \ref{as:basic_assumptions}.ii  $k_p+u_0>1$ and the region $\left[\delta,\frac{\delta k_p}{k_p+u_0-1}\right]$ is non-empty. %Recall that in the standard scalar SIS model, a transcritical bifurcation occurs at $\bar\beta = \delta$, where the IFE ($p=0$) and the EE ($p=1-\frac{\delta}{\bar\beta})$ exchange stability \cite[Lemma 3]{mei2017dynamics}. In the following theorem, we state that this transcritical bifurcation is also present in the NOD-SIS model.
%\end{proof}
%\begin{theorem}[Transcritical Bifurcation of the IIFE]
%    At $\bar\beta=\delta$, the NOD-SIS model undergoes a transcritical bifurcation where the IIFE, stable for $\bar\beta<\delta$, exchanges stability with the IEE.
%\end{theorem}
%\begin{proof}
Finally, to prove iii) we use L-S reduction. Observe $J:=J(0,0)$ has a zero eigenvalue when $\bar\beta = \delta$, and that 
%We observed that $J$ has a single zero eigenvalue. We will perform a Lyapunov-Schmidt reduction to understand the local behavior of the system close to the bifurcation point $\bar\beta=\delta$. 
%We first compute the linear projection $E:\mathbb{R}^2\to \text{range}(J)$. Simple computations give
%\begin{equation}
%E=\left[\begin{array}{cc}
%0 & 0 \\
%0 & 1
%\end{array}\right],
%\label{eq:Q}
%\end{equation}
%and thus
%\begin{equation}
%I-E=\left[\begin{array}{cc}
%1 & 0 \\
%0 & 0
%\end{array}\right].
%\label{eq:Q}
%\end{equation}
 $\text{kernel}(J)=\text{span}\{(1,0)\}$, $\text{range}(J)=\text{span}\{(0,1)\}$, and thus $\text{range}(J)^\perp=\text{span}\{(1,0)\}$. %Let $E:\mathbb{R}^2\to \text{range}(J)$ be the linear projection of $\mathbb{R}^2$ onto $\range(J)$. Observe that $E\Phi(\mathbf{y};\alpha)=\mathbf{0}$ if and only if $p=\frac{1}{k_p}\left(\frac{\text{arctanh}(x)}{x}-k_xx^2-u_0\right)$. From the implicit function theorem, we know that this expression can be inverted in a small neighborhood of $(p,x)$ to obtain an expression for $x$, that is, in a small neighborhood, we can write $x=W(p,\alpha)$. Substituting this value into $(I-E)\Phi(\mathbf{y};\alpha)=\mathbf{0}$
We compute the coefficients of the L-S reduction $g(y,\lambda)$, where $y$ is a coordinate along the linear space generated by $v=(1,0)$, the right null eigenvector of $J(0,0)$ when $\bar\beta=\delta$, and $\lambda = \bar\beta-\delta$. By performing the appropriate computations, following \cite[§3, p. 33]{Golubitsky1985} we obtain that $g_{yy}=-2\delta$, and thus $\text{sign}(g_{yy})=-1$. Also, $g_{\beta\beta}=0$, where $\beta=\bar\beta-\delta$. We compute $\text{det}\left(\begin{matrix}g_{yy} & g_{y\beta}\\ g_{y\beta} & g_{\beta\beta}\end{matrix}\right)=\text{det}\left(\begin{matrix}-2\delta & g_{y\beta}\\ g_{y\beta} & 0\end{matrix}\right) = -g_{y\beta}^2$. It only remains to prove that $g_{y\beta}\neq 0$. Straightforward computations show that $g_{y\beta}=1\neq 0$. Therefore $\text{sign(det}(d^2g))=-1$. From \cite[Proposition 9.3]{Golubitsky1985}, %we conclude that $g$ is equivalent to $\beta^2-y^2$, which is strongly equivalent to $\beta y-y^2$, the normal form of a transcritical bifurcation.
the system undergoes a transcritical bifurcation.
\end{proof}
Recall that in the SIS model~\eqref{eq:stand_sis_p}, a transcritical bifurcation occurs at $\bar\beta = \delta$, where the \textit{Infection Free Equilibrium} (IFE),
 $p=0$, and the \textit{Endemic Equilibrium} (EE), $p=1-\frac{\delta}{\bar\beta}$, exchange stability \cite[Lemma 3]{mei2017dynamics}. According to Theorem \ref{thm:transcritical}, the NOD-SIS model recovers this behavior of the SIS model. In the remainder of this section, we show that the NOD-SIS model presents richer dynamics where non-indifferent fixed points exist. We will consider two cases: weak peer pressure $k_x<\frac{1}{3}$ and strong peer pressure $k_x\geq \frac{1}{3}$. We split our analysis into these two cases because at $k_x=\frac{1}{3}$, a qualitative change occurs in the nullclines of the system. 
%These two cases present qualitatively different dynamics and correspond to low and high peer pressure in a population, respectively.
\begin{comment}
\begin{lemma}
    Consider the system determined by equations \eqref{eq:odsis_p_scalar} and \eqref{eq:odsis_x_scalar}. A point $(p^*,x^*)$ is a fixed point of the system if and only if it is the intersection of the curve $p=\frac{1}{k_p}\left(\frac{\text{arctanh}(x)}{x}-k_xx^2-u_0\right)$ with the curves $p=0$ or $p=1-\frac{\delta}{\bar\beta(1+x)}$. 
\end{lemma}
\begin{lemma}
    For $k_x<\frac{1}{3}$, the function 
\begin{equation}
f(x):=\frac{1}{k_p}\left(\frac{\operatorname{arctanh}(x)}{x}-k_xx^2-u_0\right)
\label{eq:x-nullcline}
\end{equation}
is convex. Therefore, for $k_p,k_x,u_0\in(0,1)$, $f(x)\neq 0$ for all $x\in [-1,1]$. Thus, for $k_x<\frac{1}{3}$, any fixed point $(p^*,x^*)$ of the NOD-SIS system is determined by the intersection of the curves $p=\frac{1}{k_p}\left(\frac{\text{arctanh}(x)}{x}-k_xx^2-u_0\right)$ with $p=1-\frac{\delta}{\bar\beta(1+x)}$. These two curves intersect at either zero, one, or two points. In the case where two solutions exists, let $x^*_{+}$ and $x^*_{-}$ be the larger and smaller root of 
\begin{equation}
    f_2(x):=\frac{1}{k_p}\left(\frac{\operatorname{arctanh}(x)}{x}-k_xx^2-u_0\right)+\frac{\delta}{\bar\beta(1+x)}-1.
    \label{eq:f2}
\end{equation}
Let OEE$^+=(p^*_{+},x^*_{+})$ and OEE$^-=(p^*_{-},x^*{-})$ be the opinionated endemic equilibria of the system. This name is chosen because it corresponds to positive infection levels and non-zero opinion levels.
\end{lemma}
\end{comment}
\subsection{Weak Peer Pressure}
We study the NOD-SIS model \eqref{eq:odsis_p_scalar},\eqref{eq:odsis_x_scalar} in the weak peer pressure limit $k_x<\frac{1}{3}$. 
We start by showing that for small basal urgency $u_0$, the only fixed points of the coupled system are the IEE and the IIFE, i.e. it predicts identical steady-state infection levels to those of the standard SIS model.
\begin{theorem}[SIS Equivalence] \label{thm:SIS_Equivalence}
Consider \eqref{eq:odsis_p_scalar},\eqref{eq:odsis_x_scalar}. Let $k_p, k_x, u_0\in [0,1]$ and let Assumption \ref{as:basic_assumptions} hold. For sufficiently small values of $u_0$, the IEE and the IIFE are the only fixed points of the system.
\end{theorem}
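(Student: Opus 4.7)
The plan is to parametrize all fixed points of \eqref{eq:odsis_p_scalar}--\eqref{eq:odsis_x_scalar} through the two nullclines and show that only the indifferent ones survive the small-$u_0$ limit. Setting $\dot p = 0$ gives either $p = 0$ or $p = g(x) := 1 - \delta/\bigl(\bar\beta(1+x)\bigr)$. Setting $\dot x = 0$ with $x \neq 0$ and inverting $\tanh$ (noting that $k_p > 0$ is forced by combining Assumption \ref{as:basic_assumptions}.i and \ref{as:basic_assumptions}.ii) yields $p = f(x) := \tfrac{1}{k_p}\bigl(\operatorname{arctanh}(x)/x - k_x x^2 - u_0\bigr)$. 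The branch $x = 0$ immediately produces the IIFE and the IEE from Theorem \ref{thm:transcritical}, so it suffices to rule out any $x \neq 0$ satisfying either $f(x) = 0$ or $f(x) = g(x)$.

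The first step is to establish that $f$ is strictly convex on $(-1, 1)$ with its unique minimum at $x = 0$. Using the power series $\operatorname{arctanh}(x)/x = \sum_{n \geq 0} x^{2n}/(2n+1)$, I would write $f(x) = \tfrac{1}{k_p}\bigl[(1-u_0) + (1/3 - k_x) x^2 + \sum_{n \geq 2} x^{2n}/(2n+1)\bigr]$; every non-constant coefficient is strictly positive precisely because $k_x < 1/3$, so $f$ is even, strictly convex, blows up at $x = \pm 1$, and attains its unique global minimum $f(0) = (1-u_0)/k_p$. Since $u_0 < 1$, this minimum is strictly positive, so $f(x) = 0$ has no solution and no fixed point of the form $(0, x)$ with $x \neq 0$ exists.

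The second step is to compare $f$ with $g$. Because $g$ is strictly increasing on $(-1, \infty)$, one has $g(x) \leq g(1) = 1 - \delta/(2\bar\beta) < 1$ for every $x \in [-1, 1]$. Whenever $\min_x f > \max_x g$ on this interval, $f(x) > g(x)$ pointwise and no non-indifferent endemic fixed point can exist. Rearranging $(1-u_0)/k_p > 1 - \delta/(2\bar\beta)$ yields the threshold $u_0 < 1 - k_p + k_p \delta/(2\bar\beta)$. Combining this with Assumption \ref{as:basic_assumptions}.ii ($u_0 > 1 - k_p$) produces the admissible interval $\bigl(1 - k_p,\ 1 - k_p + k_p \delta/(2\bar\beta)\bigr)$, which has positive length $k_p \delta/(2\bar\beta) > 0$. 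For any $u_0$ in this range the IIFE and IEE are the only fixed points of the system.

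The main obstacle is establishing \emph{uniform} strict convexity of $f$ on $(-1, 1)$ --- not merely at the origin --- so that the global minimum is pinned to $x = 0$ and the comparison with $g$ reduces to comparing two scalars. The power series expansion makes this transparent, and it is precisely the condition $k_x < 1/3$ that forces the coefficient of $x^2$ to be positive; without it, nonconvexity of $f$ away from the origin could generate additional intersections with $g$ (the regime explored in the next subsection) and the SIS-equivalence claim would fail.
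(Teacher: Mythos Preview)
Your proof is correct and follows the same nullcline strategy as the paper: both reduce the search for non-indifferent equilibria to the equation $f_1(x) = 1 - \delta/(\bar\beta(1+x))$ and use convexity together with monotonicity in $u_0$ to rule out solutions. The paper combines the two nullclines into $f_2(x) = f_1(x) + \delta/(\bar\beta(1+x)) - 1$, verifies that $f_2$ itself is convex for $k_x < 1/3$, and then argues that decreasing $u_0$ translates the graph upward until $f_2 > 0$ everywhere. You instead keep the two curves separate: you establish convexity only for $f_1$ via its power series (which is cleaner, since the $\delta/(\bar\beta(1+x))$ term never enters the second-derivative computation), bound $g$ above by $g(1) = 1 - \delta/(2\bar\beta)$, and compare the two scalars directly. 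This buys you an explicit threshold $u_0 < 1 - k_p + k_p\delta/(2\bar\beta)$, and your verification that this interval is nonempty and compatible with Assumption~\ref{as:basic_assumptions}.ii is a detail the paper leaves implicit.
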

\begin{proof}
We analyze the equilibria of the system by examining its nullclines. 
%\begin{lemma}
    %Consider the system determined by equations \eqref{eq:odsis_p_scalar} and \eqref{eq:odsis_x_scalar}. 
    We observe that a point $(p^*,x^*)$ is a fixed point of the system different to the IIFE and IEE if and only if it is the intersection of the curve $p=\frac{1}{k_p}\left(\frac{\text{arctanh}(x)}{x}-k_xx^2-u_0\right)$ and the curves $p=0$ or $p=1-\frac{\delta}{\bar\beta(1+x)}$. 
%\end{lemma}
We dismiss the first of these intersections by noticing that 
%\begin{lemma}
    for $k_x<\frac{1}{3}$,
\begin{equation}
\small
f_1(x):=\frac{1}{k_p}\left(\frac{\operatorname{arctanh}(x)}{x}-k_xx^2-u_0\right)
\label{eq:x-nullcline}
\end{equation}
is convex and positive %Therefore, for $k_p,k_x,u_0\in(0,1)$, $f(x)\neq 0$
for all $x\in [-1,1]$. Thus, for $k_x<\frac{1}{3}$, any fixed point $(p^*,x^*)$ of the NOD-SIS system, different from the IIFE and IEE, is determined by the intersections of $p=\frac{1}{k_p}\left(\frac{\text{arctanh}(x)}{x}-k_xx^2-u_0\right)$ and $p=1-\frac{\delta}{\bar\beta(1+x)}$. 
Let
\begin{equation}
\small
    f_2(x):=\frac{1}{k_p}\left(\frac{\operatorname{arctanh}(x)}{x}-k_xx^2-u_0\right)+\frac{\delta}{\bar\beta(1+x)}-1,
    \label{eq:f2}
\end{equation}
%These two curves intersect at either zero, one, or two points. In the case where two solutions exist, let $x^*_{+}$ and $x^*_{-}$ be the larger and smaller root of 
%\begin{equation}
%    f_2(x):=\frac{1}{k_p}\left(\frac{\operatorname{arctanh}(x)}{x}-k_xx^2-u_0\right)+\frac{\delta}{\bar\beta(1+x)}-1.
%    \label{eq:f2}
%\end{equation}
%Let OEE$^+=(p^*_{+},x^*_{+})$ and OEE$^-=(p^*_{-},x^*{-})$ be the opinionated endemic equilibria of the system. This name is chosen because it corresponds to positive infection levels and non-zero opinion levels.
%\end{lemma}
The fixed points of the system correspond to the roots of $f_2$.
Note that $\frac{\partial f_2}{\partial u_0} < 0$ and as $u_0$ decreases, the graph of $f_2$ is translated up. We see that $f_2(x)$ is convex by computing its second derivative. We see that $\frac{\partial^2 f_2}{dx^2}\geq0$ when $\frac{\text{arctanh}(x)}{x^3}+\frac{2x^2-1}{x^2(x^2-1)^2}\geq k_x$. This follows for all $x\in [-1,1]$ when $k_x<\frac{1}{3}$. Thus, if $u_0$ is small, the only fixed points of the system are the IEE and the IIFE. %We observe this in a bifurcation diagram in the left panel of Figure \ref{fig:bifurcation-u0}. %Observe that this bifurcation diagram is qualitatively equivalent to the bifurcation diagram of the standard SIS model. 
\end{proof}
\begin{figure}
    \centering
    \includegraphics[width=\linewidth]{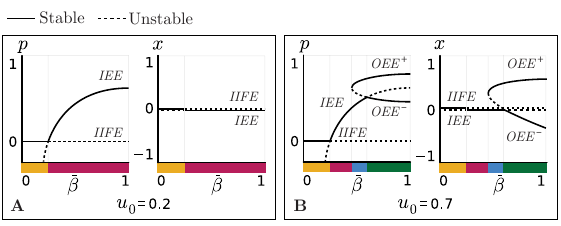}
    \caption{Bifurcation diagrams for  (A) $u_0=0.2$ and (B) $u_0=0.7$. % and $u_0=0.9$ (right). %For small basal attention levels, the system behaves exactly like the standard SIS model. As $u_0$ grows, the two opinionated endemic equilibria appear, and further, the OEE$^-$ exchanges stability with the IEE in a bifurcation that is locally like a transcritical bifurcation, and globally, the non-persistent unfolding of a pitchfork bifurcation. Parameters: $k_x=0.3, k_p=0.7,\delta=0.3$.
    For $u_0=0.7$, in the region where $\bar\beta<\delta$ (yellow), the only stable fixed point in the interpretable range is the IIFE. A transcritical bifurcation occurs when $\bar\beta=\delta$. For $\delta<\bar\beta<\bar\beta^*$ (red), the IIFE is unstable and the IEE is stable. In these two regions, the system behaves locally like the standard scalar SIS model. Let $\bar\beta_0$ be the value for which, given a set of parameters $\delta,k_p,k_x,u_0$, $f_2(x)$ has exactly one solution. For $\bar\beta \in (\bar\beta_0,\bar\beta^*)$ (blue), two new fixed points given implicitly by the roots of \eqref{eq:f2} exist, the OEE$^+$ and the OEE$^-$, the first is stable, and the latter unstable. In this region, the IIFE is unstable, and the IEE is stable. Finally, at $\bar\beta=\bar\beta^*$ the IEE exchanges stability with OEE$^{-}$ in a transcritical bifurcation. For $\bar\beta>\bar\beta^*$ (green), the only stable equilibria are the OEE$^+$ and the OEE$^{-}$. Parameters: $k_x=0.3, k_p=0.7, \delta=0.3$.}
    \label{fig:bifurcation-u0}
\end{figure}
\begin{comment}
\begin{figure}
    \centering
    \includegraphics[width=0.99\linewidth]{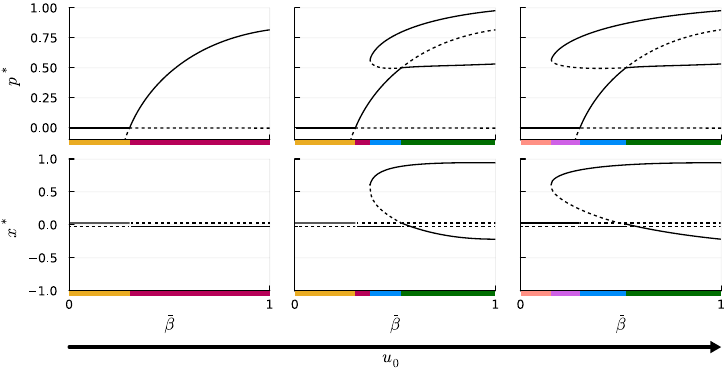}
    \caption{Bifurcation diagrams for $u_0=0.2$ (left) $u_0=0.7$ (middle) and $u_0=0.9$ (right). %For small basal attention levels, the system behaves exactly like the standard SIS model. As $u_0$ grows, the two opinionated endemic equilibria appear, and further, the OEE$^-$ exchanges stability with the IEE in a bifurcation that is locally like a transcritical bifurcation, and globally, the non-persistent unfolding of a pitchfork bifurcation. Parameters: $k_x=0.3, k_p=0.7,\delta=0.3$.
    For $u_0=0.7$ (middle), in the region where $\bar\beta<\delta$ (yellow), the only stable fixed point in the interpretable range is the IIFE. A transcritical bifurcation occurs when $\bar\beta+\delta$. For $\delta<\bar\beta<\bar\beta^*$ (red), the IIFE is unstable and the IEE is stable. In these two regions, the system behaves exactly like the standard scalar SIS model. For $\bar\beta^*<\bar\beta<\frac{\delta k_p}{k_p+u_0-1}$ (blue), two new fixed points given implicitly by the roots of \eqref{eq:f2} exist, the OEE$^+$ and the OEE$^{-}$, the first is stable, and the latter unstable. In this region, the IIFE is unstable, and the IEE is stable. Finally, at $\bar\beta=\frac{\delta k_p}{k_p+u_0-1}$ the IEE exchanges stability with OEE$^{-}$ in a transcritical bifurcation given by a partial unfolding of a pitchfork bifurcation. For $\bar\beta>\frac{\delta k_p}{k_p+u_0-1}$ (green), the only stable equilibria are the OEE$^+$ and the OEE$^{-}$. Parameters: $k_x=0.3, k_p=0.7, \delta=0.3$.}
    \label{fig:bifurcation-u0}
\end{figure}
\end{comment}
In Theorem \ref{thm:SIS_Equivalence} we proved that small urgency results in behavior equivalent to the SIS model. This result is illustrated in the bifurcation diagrams of Fig. \ref{fig:bifurcation-u0}A.  

Next, we focus on the case where $f_2(x)$ has two real roots $x^*_{+}$ and $x^*_{-}$, where $x^*_{+} \geq x^*_{-}$. %These two solutions are associated to two fixed points OEE$^+=(p^*_{+},x^*_{+})$ and OEE$^-=(p^*_{-},x^*{-})$, where $p^*_{\pm}=1-\frac{\delta}{\bar\beta(1+x^*_{\pm})}$. 
 We show that the NOD-SIS model has richer dynamics than the standard SIS model by proving the existence of a bifurcation of the IEE for $\bar\beta>\delta$. We refer to any equilibrium of \eqref{eq:odsis_p_scalar},\eqref{eq:odsis_x_scalar} for which $x \neq 0$ and $p\neq 0$ as an \textit{Opinionated Endemic Equilibrium} (OEE). 
\begin{theorem} \label{thm:second_bif}
    Let $u_0$ be such that $f_2(x)$ in \eqref{eq:f2} has exactly two real roots $x^*_{+}$ and $x^*_{-}$, with $x^*_{+} \geq x^*_{-}$ and let OEE$^+=(p^*_{+},x^*_{+})$ and OEE$^-=(p^*_{-},x^*_{-})$, where $p^*_{\pm}=1-\frac{\delta}{\bar\beta(1+x^*_{\pm})}$.
    %We call these points Opinionated Endemic Equilibria as they correspond to positive infection and non-zero opinion levels.
    Let $\bar\beta>\delta$ and $\frac{k_p\delta}{k_p+u_0-1}<1$. Under Assumption \ref{as:basic_assumptions}, the system from \eqref{eq:odsis_p_scalar} and \eqref{eq:odsis_x_scalar} undergoes a transcritical bifurcation at $\bar\beta=\bar\beta^*$. In a small neighborhood of $(p,x,\bar\beta) = \left(1 - \frac{\delta}{\bar\beta^*} ,0,\bar\beta^* \right)$, OEE$^-$ exists for $\bar\beta < \bar\beta^*$ and is unstable, and OEE$^{-}$ exists for $\bar\beta > \bar\beta^*$ and is locally asymptotically stable.  % where the IEE becomes unstable and the OEE$^-$ becomes stable as $\bar\beta$ is increased. 
    %exhibits a non-persistent unfolding of a pitchfork bifurcation. At this point, the previously stable Indifferent Endemic Equilibrium loses stability by exchanging it with OEE$^-$. This exchange locally behaves like a transcritical bifurcation, and globally, it is the non-persistent unfolding of a pitchfork bifurcation.
\end{theorem}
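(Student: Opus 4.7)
My plan is to repeat the Lyapunov--Schmidt argument of Theorem~\ref{thm:transcritical}, now applied at the IEE instead of the IIFE. First I shift coordinates by setting $q := p - p_{IEE}(\bar\beta)$ with $p_{IEE}(\bar\beta) = 1 - \delta/\bar\beta$ and introducing the parameter $\lambda := \bar\beta - \bar\beta^*$, so that the IEE sits at $(q,x)=(0,0)$ for every $\lambda$. The key simplification is that the defining equation of $\bar\beta^*$ is equivalent to $k_p p^\star + u_0 = 1$, where $p^\star := p_{IEE}(\bar\beta^*)$, and this identity makes the Jacobian of the shifted system at $(0,0,0)$ equal to
\[ J = \begin{pmatrix} -(\bar\beta^*-\delta) & \delta p^\star \\ 0 & 0 \end{pmatrix}, \]
which has a simple zero eigenvalue with $\ker(J) = \operatorname{span}\{(\delta p^\star/(\bar\beta^*-\delta),\,1)\}$ and $\operatorname{range}(J)^\perp = \operatorname{span}\{(0,1)\}$. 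Both L--S hypotheses are thus satisfied.

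Next I carry out the reduction. Since $\partial F_1/\partial q|_0 = -(\bar\beta^*-\delta) \neq 0$, the implicit function theorem produces a solution $q = w(y,\lambda)$ of the $p$-equation at steady state, which I substitute into the $x$-equation. Expanding $\tanh$ to third order around the critical argument $y + k_p w y + k_x y^3$ (the linear term is exactly $y$ because $k_p p^\star + u_0 = 1$) yields a scalar reduced function $g(y,\lambda)$ whose zeros are in one-to-one correspondence with equilibria near the IEE. Since $x=0$ is an equilibrium for every $\lambda$, $g$ factors as $g(y,\lambda) = y\,h(y,\lambda)$, giving $g_{\lambda\lambda}(0,0)=0$ immediately. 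The key coefficients I expect to extract are $g_{yy}(0,0) = 2k_p c$ and $g_{y\lambda}(0,0) = k_p\delta/(\bar\beta^*)^2$, where $c := \delta p^\star/(\bar\beta^*-\delta)$; both are strictly positive under Assumption~\ref{as:basic_assumptions}. This yields $\det(d^2 g)(0,0) = -g_{y\lambda}(0,0)^2 < 0$, and Proposition~9.3 of \cite{Golubitsky1985} delivers the transcritical bifurcation, as in the proof of Theorem~\ref{thm:transcritical}.

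Finally, I identify the OEE$^-$ branch and settle its stability. Applying the implicit function theorem to $h(y,\lambda)=0$ gives $y(\lambda) \approx -(2 g_{y\lambda}/g_{yy})\lambda$, so with both coefficients positive the nontrivial branch carries $x > 0$ for $\bar\beta < \bar\beta^*$ and $x < 0$ for $\bar\beta > \bar\beta^*$. A direct check of $f_2$ at $x=0$ confirms this branch is OEE$^-$: by definition of $\bar\beta^*$ one has $f_2(0)=0$ at $\bar\beta=\bar\beta^*$, and since $f_2$ is convex with $f_2'(0)=-\delta/\bar\beta<0$, it is the smaller root $x^*_-$ that crosses transversally through zero as $\bar\beta$ varies, while $x^*_+$ stays bounded away. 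Stability then follows from exchange of stability: Theorem~\ref{thm:transcritical} shows the IEE eigenvalue $\lambda_2$ crosses zero from negative to positive as $\lambda$ increases through $0$, so the bifurcating OEE$^-$ branch must inherit the opposite profile and is unstable for $\bar\beta < \bar\beta^*$ and locally asymptotically stable for $\bar\beta > \bar\beta^*$.

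The hard part will be the bookkeeping in computing $g_{yy}$, because the null eigenvector has a nonzero $q$-component. The $y^2$ coefficient in the reduced equation does not come from the $x$-equation directly—indeed, the $x^2$ coefficient in the $\tanh$ expansion vanishes thanks to $k_p p^\star + u_0 = 1$—but only appears after substituting $w(y,0) = cy + O(y^2)$ into the $k_p q y$ coupling inside the $\tanh$ expansion, after which the $qy$ term generates the effective $cy^2$. The identity $k_p p^\star + u_0 = 1$ is what makes this calculation tractable by collapsing the leading-order Taylor expansion, and once that simplification is in hand the remaining sign chase is routine.
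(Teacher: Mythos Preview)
Your approach is correct and essentially identical to the paper's: both perform the Lyapunov--Schmidt reduction at the IEE for $\bar\beta=\bar\beta^*$, compute the same Jacobian and the same nondegeneracy coefficients (your $g_{yy}=2k_pc$ and $g_{y\lambda}=k_p\delta/(\bar\beta^*)^2$ simplify exactly to the paper's $g_{yy}=2(k_p+u_0-1)$ and $g_{\beta y}=(k_p+u_0-1)^2/(\delta k_p)$), and invoke Proposition~9.3 of \cite{Golubitsky1985}. Your explicit identification of the bifurcating branch as OEE$^-$ via the convexity of $f_2$ and $f_2'(0)=-\delta/\bar\beta<0$, together with the exchange-of-stability argument, fleshes out steps that the paper leaves implicit in its appeal to Proposition~9.3.
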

\begin{proof}
    We perform a L-S reduction. Following the steps outlined in \cite[§3 p.33]{Golubitsky1985} we compute the leading coefficients of the normal form of the projection of \eqref{eq:odsis_p_scalar},\eqref{eq:odsis_x_scalar} onto the span of the right null eigenvector of $J(1-\frac{\delta}{\bar\beta},0)_{\bar\beta=\bar\beta^*}=
        \begin{bmatrix}
            \frac{\delta(u_0-1)}{k_p+u_0-1} & \frac{\delta(1-u_0)}{k_p}\\
            0 & 0
        \end{bmatrix}$
evaluated at $\beta=0$, where $\beta=\bar\beta-\bar\beta^*$, $g_y=g_{\beta}=g_{\beta \beta}=0$,  $g_{yy}=2(k_p+u_0-1)\neq 0$, and $g_{\beta y}=\frac{(k_p+u_0-1)^2}{\delta k_p}>0$. %., and $g_{yyy}=6(k_x-k_p-u_0)+4$. Observe that $g_{yyy}=0$ if and only if $k_x-k_p-u_0=-\frac{2}{3}$. In this singular case, the system undergoes a transcritical bifurcation. In the case where $k_x-k_p-u_0\neq-\frac{2}{3}$, the system exhibits a non-persistent unfolding of a pitchfork bifurcation \cite[Ch. III.7]{Golubitsky1985}.
From \cite[Proposition 9.3]{Golubitsky1985}, we establish the existence of a transcritical bifurcation and stability of the solution branches.
\end{proof}
Fig. \ref{fig:bifurcation-u0}B illustrates the secondary bifurcation whose existence was established in Theorem \ref{thm:second_bif}. Observe that when $u_0<k_p(\delta-1)+1$, the second bifurcation point $\bar\beta^*\notin[0,1]$.
Fig. \ref{fig:bifurcation-u0}B shows that the solution branches corresponding to OEE$^+$ and OEE$^-$ emerge from a single point and for all values $\bar\beta > \bar\beta^*$ there is a bistability between OEE$^+$ and OEE$^-$. The bifurcation diagram in Fig. \ref{fig:bifurcation-u0}B can be understood as a non-persistent unfolding of a pitchfork bifurcation \cite[§Ic]{Golubitsky1985}.
Note that the fixed points OEE$^+$ and OEE$^-$ correspond to risk seeking and risk aversion strategies, respectively. In the following corollary, we establish that risk seeking increases infection levels and risk aversion decreases infection levels from the baseline SIS predictions. We also prove that whether convergence is to OEE$^+$ or to OEE$^-$ is determined by the initial opinion. Let $p_{EE}^*=1-\frac{\delta}{\bar\beta}$ be the endemic infection levels of the standard SIS model \eqref{eq:stand_sis_p}. 
%For the following theorems, we will focus on the case where $u_0$ is large enough to allow the existence of the fixed points OEE$^+$ and OEE$^-$, and such that $\frac{k_p\delta}{k_p+u_0-1}<1$.
\begin{corollary}[Risk Seeking and Risk Aversion]
    Consider \eqref{eq:odsis_p_scalar},\eqref{eq:odsis_x_scalar}. Let Assumption \ref{as:basic_assumptions} hold and let $u_0$ be such that $f_2(x)$ in \eqref{eq:f2} has exactly two real roots %OEE$^+=(p_{+}^*, x_{+}^*)$ and OEE$^-=(p_{-}^*, x_{-}^*)$ exist, 
    and such that $\frac{k_p\delta}{k_p+u_0-1}<1$. Take $\bar\beta>\bar\beta^*$. Let $\Omega_S=[0,1]\times[0,1]$ and $\Omega_A=[0,1]\times[-1,0]$. The following statements hold. 
    \noindent i) There are exactly four equilibria: the IIFE, IEE, OEE$^+$, OEE$^-$; 
    \noindent ii) If $x(0) > 0 (< 0)$, then $x(t) > 0 (<0)$ for all $t > 0$. Furthermore, $\lim_{t \to \infty} (p(t),x(t)) = (p^*_{+},x^*_{+})$ 
    %or $(p^*_{-},x^*_{-})$ 
    for all initial conditions in the interior of $\Omega_S$;
    %($\Omega_A$) 
    %that are not on a periodic orbit;
    \noindent iii) $p^*_{-} \leq p_{EE}^* \leq p^*_+.$
    %Then, the opinion levels at steady state are determined completely by the sign of $x(0)$,
    %Since $f_2(x)=0$ has only two real solutions, then the only fixed points are the IIFE, the IEE, the OEE$^+$, and the OEE$^-$, and the IIFE and IEE are unstable.
    %Then, if $x(0) > 0 (< 0)$ we have that $x(t) > 0 (<0)$ for all $t > 0$, and $\lim_{t \to \infty} (p(t),x(t)) = (p^*_{+},x^*_{+})$ or $(p^*_{-},x^*_{-})$, respectively.  Furthermore, the following inequalities are satisfied:
    %\begin{equation}
    %    p^*_{-} \leq p_{EE}^* \leq p^*_+.
    %\end{equation}
\end{corollary}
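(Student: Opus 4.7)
My plan is to prove the three claims separately, reusing the nullcline description of Theorem~\ref{thm:SIS_Equivalence} and the linearizations in Theorems~\ref{thm:transcritical} and~\ref{thm:second_bif}. Part (i) is an equilibrium enumeration, part (ii) is a planar phase-portrait argument exploiting the invariance of the line $\{x=0\}$, and part (iii) is a direct algebraic comparison.

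For part (i), I classify fixed points by whether $x^* = 0$ or not. If $x^*=0$, then $\dot x=0$ holds automatically and $\dot p=0$ gives $p^* \in \{0,\, 1-\delta/\bar\beta\}$, yielding the IIFE and IEE. If $x^* \neq 0$, then an endemic fixed point ($p^* > 0$) must satisfy $p^*=1-\delta/(\bar\beta(1+x^*))$, and substituting into $\dot x=0$ forces $f_2(x^*)=0$; by hypothesis this has exactly two real roots, giving the two OEEs.

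For part (ii), observe that $\dot x|_{x=0} = -0 + \tanh(0) = 0$ independent of $p$, so $\{x=0\}$ is invariant. Uniqueness of solutions then precludes any trajectory from crossing $\{x=0\}$, so the sign of $x(t)$ is preserved, and together with Theorem~\ref{thm:invariance} this yields forward invariance of $\Omega_S$ and $\Omega_A$. Within $\Omega_S$ for $\bar\beta > \bar\beta^*$, the only equilibria are the two boundary saddles IIFE and IEE (by \eqref{eq:jacobian_00},\eqref{eq:jacobian_iee}) whose local stable manifolds are tangent to the invariant axes $\{p=0\}$ and $\{x=0\}$, respectively, together with the interior sink OEE$^+$. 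Since those axes are globally invariant, the stable manifolds of the boundary saddles do not penetrate the interior, and Poincar\'e--Bendixson leaves OEE$^+$, a periodic orbit, or a heteroclinic cycle as the only candidate $\omega$-limit sets in the compact region. The main obstacle is excluding periodic orbits; I would invoke the Bendixson--Dulac criterion with a multiplier of the form $\phi(p,x)=1/p$, which already renders $\partial_p(\phi F_1) = -\bar\beta(1+x) < 0$, and complete the sign analysis of $\partial_x(\phi F_2)$, augmenting $\phi$ with an $x$-dependent factor if needed to handle the $\tanh$ term. Heteroclinic cycles in the interior of $\Omega_S$ are then excluded because a connection from IEE back to IIFE would have to approach the stable manifold $\{p=0\}$ of IIFE, while the unstable eigenvector of IEE has a positive $p$-component and so directs the unstable manifold into $\{p>0\}$.

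For part (iii), I substitute $x^*_\pm$ into $p^*_\pm = 1-\delta/(\bar\beta(1+x^*_\pm))$. The transcritical bifurcation established at $\bar\beta^*$ implies that for $\bar\beta > \bar\beta^*$ the branch OEE$^-$ lies on the opposite side of $\{x=0\}$ from OEE$^+$, so $x^*_- < 0 < x^*_+$ (consistent with $x^*_+ \geq x^*_-$). Since the map $x \mapsto 1 - \delta/(\bar\beta(1+x))$ is strictly increasing on $(-1,\infty)$, this immediately yields $p^*_- < 1 - \delta/\bar\beta = p^*_{EE} < p^*_+$, which is the claimed inequality.
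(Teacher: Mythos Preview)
Parts (i) and (iii) are fine and essentially coincide with the paper's argument: nullcline enumeration for (i), and monotonicity of $x\mapsto 1-\delta/(\bar\beta(1+x))$ for (iii).

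The substantive divergence is in part (ii). The paper does \emph{not} go through Poincar\'e--Bendixson plus Dulac. Instead, after establishing invariance of $\{x=0\}$ exactly as you do, it observes that the off-diagonal Jacobian entries
\[
J_{12}(p,x)=\bar\beta(1-p)p,\qquad J_{21}(p,x)=\tfrac{k_p}{\tau_x}\,x\,\operatorname{sech}^2\!\big((k_pp+k_xx^2+u_0)x\big)
\]
are both nonnegative throughout $\Omega_S$. The system is therefore cooperative (monotone) on $\Omega_S$, and a standard monotone systems result (Hirsch's theorem, \cite[Theorem 3.22]{hirsch2006monotone}) forces every bounded trajectory to converge to a single equilibrium. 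Since OEE$^{+}$ is the only equilibrium in the interior of $\Omega_S$, the conclusion follows immediately.

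Your Dulac route is not wrong in principle, but as written it has a genuine gap: you never produce a multiplier that works, and the candidate $\phi=1/p$ does not. With that choice,
\[
\partial_p(\phi F_1)+\partial_x(\phi F_2)
= -\bar\beta(1+x)+\tfrac{1}{\tau_x p}\Big(-1+\operatorname{sech}^2(ux)\,(u+2k_xx^2)\Big),\quad u=k_pp+k_xx^2+u_0 .
\]
Evaluating at $(p_{IEE},0)$ gives $-\bar\beta+\tfrac{1}{\tau_x p_{IEE}}\big(k_pp_{IEE}+u_0-1\big)$, and for $\bar\beta>\bar\beta^*$ the parenthesis is strictly positive, so the divergence changes sign for small $\tau_x$ or small $p_{IEE}$. ``Augmenting $\phi$ with an $x$-dependent factor if needed'' is precisely the missing idea; you would have to actually construct it, and it is not obvious one exists uniformly in the parameters. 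The cooperativity observation sidesteps this entirely and simultaneously rules out periodic orbits and any need to track stable manifolds of the boundary saddles.
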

\begin{proof}
    i) Since $f_2(x)=0$, this claim follows by analogous nullcline arguments as the proof of Theorem \ref{thm:SIS_Equivalence}; % has only two real solutions, then the only fixed points are the IIFE, the IEE, the OEE$^+$, and the OEE$^-$, and the IIFE and IEE are unstable.
    ii)  Observe that the set $[0,1]\times\{0\}$ is invariant under the flow of \eqref{eq:odsis_p_scalar}, \eqref{eq:odsis_x_scalar}. Recall from Theorem \ref{thm:invariance} that $ \Omega = [0,1]\times[-1,1]$ is forward invariant. Since $[0,1]\times\{0\}$ partitions $\Omega$ into $\Omega_S$ and $\Omega_A$ and no flow crosses the boundary, the two sets are themselves forward invariant. Next, observe that OEE$^+ \in \Omega_S$ and OEE$^- \in \Omega_A$ are interior points; recall that the IIFE and IEE are unstable under the parameter assumptions of this corollary following Theorems \ref{thm:transcritical} and \ref{thm:second_bif}. Observe that the off-diagonal entries of the Jacobian matrix of \eqref{eq:odsis_p_scalar},\eqref{eq:odsis_x_scalar} are $J_{12}(p,x) = \bar\beta (1-p)p$ and $J_{21}(p,x) = \frac{k_p}{\tau_x} x \operatorname{sech}^2((k_p p + k_x x^2 + u_0) x)$.  Observe that in $\Omega_S$, $J_{12}(p,x) \geq 0$ and $J_{21}(p,x) \geq 0$. This means the system is cooperative and therefore monotone in $\Omega_S$, and by \cite[Theorem 3.22]{hirsch2006monotone}, the $\omega$-limit set for any trajectory starting in the interior of $\Omega_S$ is a single equilibrium. Since OEE$^{+}$ is the only equilibrium in the set interior, all trajectories inside $\Omega_S$ approach OEE$^+$ as $t \to \infty$. %By the Poincar\'e-Bendixson theorem for planar flows \cite[Theorem 9.0.6]{wiggins2003introduction}, every $\omega$-limit set in the compact set $\Omega_S$ ($\Omega_A$) is either an isolated equilibrium or a periodic orbit. To rule out an attracting periodic orbit, we observe that the off-diagonal entries of the Jacobian matrix of \eqref{eq:odsis_p_scalar},\eqref{eq:odsis_x_scalar} are $J_{12}(p,x) = \bar\beta (1-p)p$ and $J_{21}(p,x) = \frac{k_p}{\tau_x} x \operatorname{sech}^2((k_p p + k_x x^2 + u_0) x)$.  Observe that in $\Omega_S$, $J_{12}(p,x) \geq 0$ and $J_{21}(p,x) \geq 0$ which means the system is cooperative in $\Omega_S$, and any periodic orbit in $\Omega_S$ must be unstable  \cite[Theorem 2.2]{smith1995monotone}. 
    %A trajectory starting from any point in the interior of $\Omega_S$ that is not in a periodic orbit will therefore approach OEE$^+$ as $t \to \infty$. 
    %, and therefore the sign of $x$ at steady state is determined completely by $x(0)$.
%\begin{theorem}[Aversion and Tolerance affect Infection Levels]
%     Let $u_0$ be such that OEE$^+=(p_{+}^*, x_{+}^*)$ and OEE$^-=(p_{-}^*, x_{-}^*)$ exist and $\frac{k_p\delta}{k_p+u_0-1}<1$. Let $p_{EE}^*=1-\frac{\delta}{\bar\beta}$ be the endemic infection levels of the standard SIS model. Let $\bar\beta>\frac{k_p\delta}{k_p+u_0-1}$. The following inequalities are satisfied:
%    \begin{equation}
%        p^*_{-} \leq p_{EE}^* \leq p^*_+,
%    \end{equation}
%\end{theorem}
    iii) The steady-state infection values $p_-^*$ and $p_+^*$ satisfy $p^*_{\pm}=1-\frac{\delta}{\bar\beta(1+x^*_{\pm})}$; then $ p^{*}_{\pm} - p^*_{EE} =\frac{\delta}{\bar \beta} \left(\frac{x^*_{\pm}}{1 + x^*_{\pm}}\right)$ and $
    \operatorname{sign}(p^{*}_{\pm} - p^*_{EE}) = \operatorname{sign}(x^*_{\pm})$ as long as $|x^*_{\pm}|<1$, where $x^*_+$ and $x^*_-$ are the positive and negative roots of $f_2$ in \eqref{eq:f2}. 
    %Observe that greater risk aversion leads to a greater decrease in the infection levels in the OEE$^{-}$, and greater risk seeking leads to a greater increase in the infection levels in the OEE$^+$.
\end{proof}
Since $f_2(x)$ has two real roots for large enough $u_0$, we know that the previous result is valid  in a population with high basal urgency. In Fig. \ref{fig:bifurcation-u0} we see bifurcation diagrams for the system when $u_0=0.2,0.7$. We see that the IIFE and IEE exchange stability when $\bar\beta=\delta$, and at $\bar\beta=\bar\beta^*$%\frac{\delta k_p}{k_p+u_0-1}$
, the system undergoes a second bifurcation where the IEE and the OEE$^-$ exchange stability. In this regime and for large $\bar\beta$, the system settles to a bistable endemic state where opinion and infection levels are determined completely by the sign of $x(0)$, and risk aversion results in lower infection than risk seeking.
Fig.~\ref{fig:trajectories} shows trajectories for random initial conditions and $u_0=0.7$ and for different values of $\bar\beta$ in $[0,1]$. These trajectories show that $\bar\beta<\bar\beta^*$ leads to behavior of the SIS model, while for $\bar\beta>\bar\beta^*$, the richer dynamics of the NOD-SIS model distinguish the risk seeking and risk aversion strategies. 
\begin{figure}
    \centering
    \includegraphics[width=\linewidth]{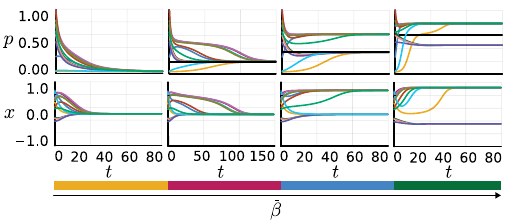}
    \caption{Trajectories for $12$ random initial conditions for $\bar\beta = 0.25$, $\bar\beta=0.36$, $\bar\beta =0.44$, and $\bar\beta =0.75$, that correspond to each of the regions (yellow, red, blue and green) of Fig. \ref{fig:bifurcation-u0}B when $u_0=0.7$. The black line in the infection plots is the endemic equilibrium of the standard SIS model. For $\bar\beta=0.25,0.36$, the system converges to the IIFE and IEE, respectively. For $\bar\beta=0.44$, agents who begin with an averter strategy converge to the endemic equilibrium of the SIS model, while agents who start with a risk seeking strategy converge to a higher infection level. For $\bar\beta=0.75$, the system's trajectories converge to one of the two opinionated equilibria determined by the initial opinions. Parameters: $\delta=0.3,k_p=0.7,k_x=0.3,\tau_x=1$.}
    \label{fig:trajectories}
\end{figure}
In the next section, we explore numerically the case when %the opinion feedback gain 
$k_x$ is high, and the function $f_1(x)$ defined in \eqref{eq:x-nullcline} is not convex. We see that for certain parameter regimes, risk aversion allows the complete eradication of infection, while risk seeking increases infection levels.

\subsection{Strong Peer Pressure}
In this section we explore numerically the behavior of the system when peer pressure $k_x$ is large. We see that a stable \textit{Opinionated Infection Free Equilibrium} (OIFE) exists with the risk averter strategy, and a symmetric stable OIFE point does not exist with the risk seeking or risk-neutral strategy. 
%We begin with the following remark:
\begin{remark}
For $k_x>\frac{1}{3}$, the function $f_1(x)$ in \eqref{eq:x-nullcline} is not convex, and we can find $u_0$, $k_p$ and $k_x$ such that $f_1(x)=0$.
%\begin{equation}
%\frac{1}{k_p}\left(\frac{\operatorname{arctanh}(x)}{x}-k_xx^2-u_0\right)=0.
%\label{eq:new_roots}
%\end{equation}
%\begin{figure}
%    \centering
%    \includegraphics[width=\linewidth]{Figures/kxlarge.png}
%    \caption{Caption}
%    \label{fig:enter-label}
%\end{figure}
%More new fixed points. The shape of the x nullcline now changes significantly, and there are more fixed points. I will make a separate analysis of this case. Alternatively, take the whole urgency as a "bifurcation parameter." When it is low, we just have the standard SIS. Then, as it grows, we have what is in the last section, and when it grows more, we get a different behavior, and this section will explore that behavior. I think it would be cool to talk about the urgency if, for example, we are dealing with a specific disease for which $\delta$ and $\bar\beta$ are fixed, and $\bar\beta>\delta$, so the IFE is no longer stable. By increasing the urgency "artificially" significantly, in particular, increasing $k_x$, new non-indifferent infection-free equilibrium points appear (I think four appear and two are stable). Meaning that the system can converge to an infection-free state even when the disease is highly infectious, i.e., $\bar\beta$ is high, and I think this is cool because it means that with high urgency, it is possible to go to no disease, at least locally and for some small initial conditions.
The solutions to this equation %\eqref{eq:new_roots} 
correspond to null infection and non-zero opinion levels, and do not depend on the values of $\delta$ and $\bar\beta$. Thus, even for a large value of $\bar\beta$, associated with very infectious diseases, an OIFE exists. 
\end{remark}
In numerical simulations we see that only one of the OIFE associated to the roots of $f_1(x)$ %\eqref{eq:new_roots} 
is stable, and it corresponds to negative opinion levels (risk aversion). We see that the steady-state behavior is determined by $\sgn(x(0))$. When $x(0)<0$, i.e., when the initial opinion is towards risk aversion, the population reaches the stable OIFE and thus eliminates the disease. If $x(0)>0$, the population reaches an OEE, and the infection levels are higher than the EE in the SIS. This demonstrates an absence of symmetry in infection levels associated with the different strategies. It suggests that in a population with high sensitivity to opinion levels and urgency levels, risk aversion is beneficial as it leads to an infection-free state. We leave the analysis of the equilibria and bifurcations in this parameter regime for future work.
\begin{figure}
    \centering
    \includegraphics[width=\linewidth]{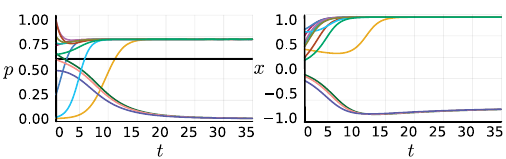}
    \caption{Initial opinion towards the risk seeking or aversion strategy is reinforced in a population with high peer pressure, and the sign of initial opinions are determinant of the infection levels at steady state. Initial averters reach an infection-free state, while initial risk seekers reach an endemic state with higher infection levels than the endemic equilibrium of the standard SIS infection level, represented by a thick black line. Parameters: $\delta = 0.3, \bar\beta = 0.75, u_0 = 0.9, k_p = 0.7, k_x = 0.7.$}
    \label{fig:largekx}
\end{figure}

\section{Numerical Simulations for Structured Populations} 
\label{sec:numerical_simulations}
We explore the behavior of the NOD-SIS model in  a structured population. 
Variables $p_j$ and $x_j$ are population $j$ infection and opinion levels. Parameter $\delta_j$ is the recovery rate in population $j$. 
We consider two networks with graph adjacency matrices $A$ and $\hat{A}$. $A$ represents the physical contacts between subpopulations: %, i.e., it encodes how subpopulations physically interact with each other. 
edge $(i,j)\in E(A)$ if and only if subpopulation $i$ has physical contact with subpopulation $j$.  $\hat{A}$ encodes communication in, for example, a social online network: edge $(i,j)\in E(\hat{A})$ if and only if subpopulation $i$ shares information with subpopulation $j$. We assume $A$ and $\hat{A}$ are symmetric, connected, and that $a_{ii}=1$ and $\hat{a}_{ii}=1$ to account for transmission within subpopulations. We use two distinct networks to distinguish virus transmission from information transmission.
%, governed by $A$, and opinion transmission, governed by $\hat{A}$. 
The dynamics are %in a structured population are
\begin{align}
    \dot{p}_j &= \bar\beta(1+x_j)(1-p_j)  \sum_{k=1}^N a_{jk}p_k-\delta_j p_j,\label{eq:odsis_p_network}\\
    \tau_x\dot{x}_j&=-x_j+\tanh\left(u_j\cdot\left(\sum_{\substack{k=1}}^N \hat{a}_{jk}x_k\right)\right) \label{eq:odsis_x_network},
    %u_j &= k_p\frac{1}{\hat{d}_j}\sum_{k=1}^N \left|\hat{a}_{jk}\right| p_k + k_x\sum_{k=1}^N \hat{a}_{jk}x_j^2 + u_0.\label{eq:odsis_u_network}
\end{align}
\normalsize
where $u_j:= k_p\frac{1}{\hat{d}_j}\sum_{k=1}^N \left|\hat{a}_{jk}\right| p_k + k_x\sum_{k=1}^N \hat{a}_{jk}x_j^2 + u_0$ and $\hat{d}_j=\sum_{k=1}^N a_{jk}$. These equations generalize \eqref{eq:odsis_p_scalar} and \eqref{eq:odsis_x_scalar} accounting for the role of the networks $A$ and $\hat{A}$. We assume $\delta_j=\delta$, i.e. all subpopulations recover at the same rate $\delta$. %For our explorations, 

We compare this system with the standard network SIS model~\cite{mei2017dynamics} in Fig. \ref{fig:ant}. Column 1 shows contact network $A$ and two different communication networks: $\hat{A}_{coop}$ has all positive edges corresponding to cooperation between subpopulations, and $\hat{A}_{ant}$ has negative edges corresponding to antagonism. %$A$ is a wheel graph with $5$ nodes, and in both cases $\hat{A}$ is a complete graph of the same size. %We first explore the case where all the edges of $\hat{A}$ are positive, as it describes a cooperation between subpopulations. 
%For these cases, we explore how trajectories behave in comparison to the standard network SIS model for different sets of initial conditions. The second and third columns  of Fig. \ref{fig:ant} correspond to trajectories using $A$ and $\hat{A}_{coop}$. 
In columns 2 and 3, cooperation makes all subpopulations reach either a risk seeking (red) or risk aversion (green) strategy, and the common choice determines the steady-state infection level. As in the well-mixed case, common risk aversion results in lower infection levels for all subpopulations as compared to the standard network SIS, while risk seeking behavior increases infection levels. 
In column 4 of Fig. \ref{fig:ant} %we explore how antagonism affects trajectories using $\hat{A}_{ant}$ as information network. In this case 
antagonism leads to different subpopulations settling at different strategies. Risk aversion subpopulations reach lower infection levels than risk seeking subpopulations. %Future work will explore how the topology of the contact and communication networks determines infection levels.

\begin{comment}
\begin{figure}
    \centering
    \includegraphics[width=\linewidth]{Figures/coopc.pdf}
    \caption{Contact and communication graphs $A$ and $\hat{A}_{coop}$ (left) for $5$ subpopulations where cooperation between populations is present. We see that populations reach a state of agreement for either risk aversion (middle) or risk seeking (right). The strategy chosen determines infection levels, and aversion reduces infection levels with respect to the standard network SIS model for the same initial conditions, while risk seeking increases infection levels. Parameters: $\bar\beta = 0.5,\delta = 0.3, k_p = 0.5,k_x = 0.3, u_0 = 0.7$.}
    \label{fig:networks_coop}
\end{figure}
\end{comment}

\begin{figure}
    \centering
    \includegraphics[width=\linewidth]{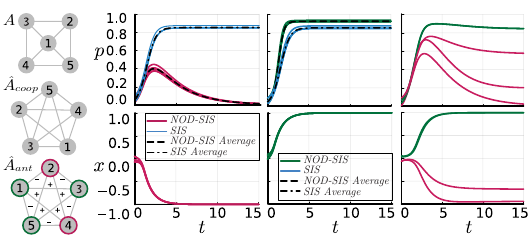}
    \caption{Column 1 shows contact graph $A$ and communication graphs $\hat{A}_{coop}$ and $\hat{A}_{ant}$ for $5$ subpopulations. Columns 2 and 3 show that under a cooperation regime ($\hat{A}=\hat{A}_{coop}$), subpopulations reach a state of agreement for either risk aversion (col. 2) or risk seeking (col. 3). The strategy chosen determines infection levels, and aversion reduces infection levels with respect to the standard network SIS model for the same initial conditions, while risk seeking increases infection levels.  In column 4 we see that antagonism between subpopulations ($\hat{A}=\hat{A}_{ant}$) results in disagreement and some subpopulations choose risk aversion (red) and others choose a risk seeking strategy (green). %Risk aversion reduces infection while risk seeking increases infection levels. 
    For all graphs $a_{ii}=1$ and $\hat{a}_{ii}=1$ but not shown in graphs. Parameters: $\bar\beta = 0.5,\delta = 0.3, k_p = 0.5,k_x = 0.3, u_0 = 0.7$.}
    \label{fig:ant}
\end{figure}

\section{Conclusion and Future Directions}
\label{sec:conclusion}
We presented the NOD-SIS model to couple the epidemiological SIS model with opinion dynamics in a well-mixed population. %We analyzed the case when there is 
For low peer pressure and low infectiousness, the system behaves locally like the standard SIS model. For higher infectiousness, the system presents a state of bistability where a population's initial opinion for risk seeking or risk aversion increases or decreases the steady-state infection levels when compared to the basal SIS model. % and the sign of the initial opinion determines the basin of attraction of each state. We showed that risk aversion reduces infection levels, and risk seeking increases infection with respect to the standard SIS model. 
For high peer pressure and high basal urgency, initial risk aversion drives the system to an \textit{opinionated  infection-free} equilibrium. %that can be reached with initial risk aversion. %In this regime, risk seeking drives the system to positive infection levels. 

We explored the NOD-SIS model in a structured population using two networks among subpopulations: a contact network to model infection spread and a communication network to model information spread. When the communication network is cooperative, all subpopulations choose risk aversion or all choose risk seeking. %As in the well-mixed setting, risk aversion (seeking) decreases (increases) infection levels relative to the infection levels of the network SIS. % for the same initial conditions. 
When the communication network has antagonistic interactions, %the system reaches a state of disagreement where 
some subpopulations choose risk aversion and some choose risk seeking. %Averters reach lower infection levels than risk seekers. 
In future work, we will analyze the dynamical properties in the network setting and explore how the two different networks influence the system's outcomes. We will also test the NOD-SIS model in real data to explore its policy implications.

\begin{comment}
\section*{Appendix}
We compute the L-S Reduction for Theorem \ref{thm:transcritical}. Let $v=(1,0)^T$, $w=(1,0)^T$, and $\beta=\bar\beta-\delta$. We proceed to compute the L-S coefficients following \cite[Equation 3.23]{Golubitsky1985}. We define $h_1((p,x),\bar\beta):=\bar\beta(1+x)(1-p)p-\delta p$ and $h_2((p,x),\bar\beta):=\frac{1}{\tau_x}(-x+\tanh((k_xx^2+k_pp+u_0)x)$, and let $F((p,x),\bar\beta):=\left(h_1((p,x),\bar\beta), h_2((p,x),\bar\beta) \right)^T$. Then  $g_{\beta}:=\left\langle w,F_{\beta}((0,0),0)\right\rangle=0$. Next, let $y=(p,x)$, then $g_{yy}:=\left\langle w, d^2F_{((0,0),0)}(v,v)\right \rangle=-2\delta$, and since $\text{sign}(\delta)=1$, then $\text{sign}(g_{yy})=-1$. Finally, $g_{\beta y}=\langle w, dF_{\beta}\cdot v\rangle=1\neq0$. Finally, $\text{sign} \begin{vmatrix}
g_{yy} & g_{\beta y}\\
g_{\beta y} & g_{\beta \beta}
\end{vmatrix}=\begin{vmatrix}
-2\delta & 1\\
1 & 0
\end{vmatrix}=-1$.
\end{comment}

\bibliographystyle{ieeetr}
\bibliography{bib_av}

\end{document}